\theoremstyle{plain}
\newtheorem{thm}{\protect\theoremname}
  \theoremstyle{definition}
  \newtheorem{defn}[thm]{\protect\definitionname}
  \theoremstyle{definition}
  \newtheorem{example}[thm]{\protect\examplename}
  \theoremstyle{remark}
  \theoremstyle{plain}
  \newtheorem{lemma}[thm]{\protect\lemmaname}
\theoremstyle{plain}
\newtheorem{prop}{\protect\propositionname}
\theoremstyle{plain}
\newtheorem{cor}{\protect\corname}
  \theoremstyle{definition}
  \newtheorem{algorithm}[thm]{\protect\algname}
  \theoremstyle{definition}
  \newtheorem{computation}[thm]{\protect\computationname}
  \providecommand{\definitionname}{Definition}
  \providecommand{\examplename}{Example}
  \providecommand{\lemmaname}{Lemma}
  \providecommand{\remarkname}{Remark}
\providecommand{\theoremname}{Theorem}
\providecommand{\propositionname}{Proposition}
\providecommand{\corname}{Corollary}
\providecommand{\algname}{Algorithm}
\providecommand{\computationname}{Computation}
\DeclareMathOperator{\Jac}{Jac}
\DeclareMathOperator{\MLdegree}{MLdegree}
\DeclareMathOperator{\minors}{minors}
\DeclareMathOperator{\codim}{codim}
\newcommand{\cPu}{ {\cal{P}}_X(u)}
\newcommand{\cBu}{ {\cal{B}}_X(u)}
\newcommand{\lup}{l_u(p)}
\newcommand{\luP}{l_u'(p)}
\newcommand{\luB}{l_u'(b)}
\newcommand{\N}{\mathbb{N}}
\newcommand{\PP}{\mathbb{P}}
\begin{document}
%

\title{Maximum Likelihood for Dual Varieties}

\author{Jose Israel Rodriguez\thanks{
Department of Mathematics,
University of California at Berkeley,
Berkeley, CA 94720; 
  {\tt jo.ro@berkeley.edu.}}}

\date{19 May 2014}

\maketitle
\begin{abstract}
Maximum likelihood estimation (MLE) is a fundamental computational problem in statistics.  
In this paper, MLE for  statistical models with discrete data  is studied from an algebraic statistics viewpoint. 
A reformulation of the MLE problem in terms of dual varieties and conormal varieties will be given. 
With this description,  the dual likelihood equations and the dual MLE problem are defined.
We show that solving the dual MLE problem yields solutions to the MLE problem, so  we can solve the  MLE problem without ever determining  the defining equations of the~model.  
\end{abstract}

\section{Introduction}

Maximum likelihood estimation (MLE) is a fundamental problem in statistics  that has been extensively studied from an algebraic viewpoint \cite{CHKS06,DR13,DSS09,HRS13,HKS05,Huh13}.
We continue to follow  an algebraic approach to MLE in this paper considering  statistical  models for discrete data in the probability simplex  as irreducible varieties $X$ in complex projective space~$\mathbb{P}^n$.

An {\em algebraic statistical  model $X$} in $\mathbb{P}^n$  will be  defined by  the vanishing of homogeneous polynomials in the unknowns $p_0,p_1,\dots,p_n$. 
We assume that $X$ is an irreducible generically reduced variety.  
When the coordinates $p_0,p_1,\dots,p_n$ of a point $p$ in  $X$  are positive and sum to one, we interpret $p$  as a probability distribution, where the probability of observing event $i$ is $p_i$.   
We let $u=(u_0,u_1,\dots,u_n)\in(\mathbb{C}^{*})^{n+1}$ 
be a vector of length $n+1$ called \textit{data}.  
When each entry $u_i$ of the vector is a positive integer we interpret $u_i$ as the number of observations of event $i$.  
We use the notation  
$$u_+:=u_0+\cdots+u_n\text{ and }p_+:=p_0+\cdots +p_n,$$
 always assuming $u_+\neq 0$.

The likelihood function for  $u$ is defined as 
$$\lup:=p_0^{u_0}p_1^{u_1}\cdots p_n^{u_n}/p_+^{u_+}.$$
When $u$ and $p$ are interpreted as data and a probability distribution respectively, 
the likelihood of observing $u$ with respect to the distribution $p$ is $\lup$ divided by a multinomial coefficient depending only on $u$.
 
For fixed data $u$, to determine local maxima of $\lup$ on a statistical model and give a solution to the MLE problem, we determine all complex critical points of $\lup$ restricted to $X$.  Of these critical points, we find the one with positive coordinates and  greatest likelihood to determine the maximum likelihood estimator $\hat p$. 
The  \textit{(algebraic)  maximum likelihood estimation problem}  is solved by determining all critical points of $\lup$ on $X$ and maximizing $\lup$ on this set. 

To find the complex critical points, we determine when the gradient of $\lup$  is orthogonal to the tangent space of $X$ at $p$. So the set of critical points is
$$\{p\in X_{reg}\text{ such that }\nabla \lup\perp T_p X\}.$$
 The gradient of the likelihood function  equals
 $\left[\frac{u_0}{p_0}-\frac{u_+}{p_+},\frac{u_1}{p_1}-\frac{u_+}{p_+},\dots,\frac{u_n}{p_n}-\frac{u_+}{p_+}\right]$,
 up to scaling by
 $\lup/p_+^{u_+}$.
So the critical points of $\lup$ are $p\in X_{reg}$ such that 
$$\left[\frac{u_0}{p_0}-\frac{u_+}{p_+},\frac{u_1}{p_1}-\frac{u_+}{p_+},\dots,\frac{u_n}{p_n}-\frac{u_+}{p_+}\right]\perp T_p(X),$$
implicitly forcing the condition $p_0p_1\cdots p_n(p_0+\cdots +p_n)\neq 0$.

\begin{defn} \label{def:mldegree} 
Given an algebraic statistical model $X$ in $\mathbb{P}^n$, the {\em maximum likelihood degree} (ML degree) of $X$ is the number of critical points of $\lup$ restricted to $X$ for generic choices of data $u$,
$$\MLdegree (X)=\# \left\{p\in X : \nabla\lup\perp T_p(X)\right\}.$$
\end{defn}

The main result of this paper is to give a formulation that relates maximum likelihood estimation to a conormal variety derived from $X$ [Theorem \ref{thm:projMLD}]. With this  perspective, we use the dual likelihood equations [Theorem \ref{dualEq}] to solve the MLE problem for $X$  when  only given the defining equations of its dual variety $X^{*}$.

The computations in this paper were done using \texttt{Bertini}~\cite{Bertini}
 and  \texttt{Macaulay2} \cite{M2}.

\section{Maximum Likelihood Estimation}

In this section, we consider an algebraic statistical model $X$  in $\mathbb{P}^n$ and will define $X'$ to be an embedding of $X$ in $\mathbb{P}^{n+1}$.
We will present our first result in Theorem \ref{thm:projMLD} 
giving a formulation of the MLE problem in terms of 
conormal varieties and dual varieties. 
In Corollary \ref{bijectionPB} we  give a bijection between critical points of the likelihood function on two different varieties. 
In Corollary \ref{cor:mlequations} we  give equations to solve the MLE problem if we have equations that define a conormal variety. 
We will also recall how to compute conormal varieties and dual varieties of $X$ and $X'$.  

Let $X\subset \mathbb{P}^n$  be a codimension $c$  algebraic statistical model defined by homogenous polynomials $f_1,f_2,\dots,f_k$. 
We let $\Jac(X)$ denote the $k\times (n+1)$ matrix of partial derivatives of $f_1,\dots,f_k$ with respect to $p_0,\dots,p_n$, and we say this is the {\em Jacobian of} $X$.

To keep track of the sum of the coordinates $p_0,p_1,\dots,p_n$ we introduce the coordinate $p_s$ and a hyperplane in $\mathbb{P}^{n+1}$  defined by the vanishing of the polynomial 
\begin{equation}\label{Heq}
H(p):=-p_0-p_1+\cdots-p_n+p_s.
\end{equation} 

If $X$ is defined by $f_1,\dots,f_k$
 then 
 $X'$ in the coordinates $p_0,p_1,\dots,p_n,p_s$ is defined by the vanishing of $f_1,\dots,f_k$ and $H$.
With this definition, we get the following proposition.

\begin{prop}\label{jacX}
If $X$ is defined by the homogeneous polynomials $f_1,f_2,\dots,f_k$ then the Jacobian of $X'$ is given by the $(k+1)\times (n+2)$-matrix 
\[
\Jac(X')=\left[\begin{array}{cc}
\begin{array}{cccc}
-1 & -1 & \cdots & -1\end{array} & 1\\
\Jac\left(X\right) & \begin{array}{c}
0\\
\vdots\\
0
\end{array}
\end{array}\right].
\]
\end{prop}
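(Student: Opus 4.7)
The plan is to verify the matrix entry-by-entry by directly computing partial derivatives of the defining polynomials of $X'$. Since $X'$ is cut out in the variables $p_0,p_1,\dots,p_n,p_s$ by the $k+1$ polynomials $H,f_1,\dots,f_k$, its Jacobian is the $(k+1)\times(n+2)$ matrix whose rows are the gradients of these polynomials in the order they are listed. I will take the row corresponding to $H$ as the first row so that the block structure lines up with the statement.

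First I would compute the gradient of $H(p)=-p_0-p_1-\cdots-p_n+p_s$. Its partial derivative with respect to $p_i$ for $0\le i\le n$ is $-1$, and its partial derivative with respect to $p_s$ is $1$. This produces the top row $[\,-1\ -1\ \cdots\ -1\ \mid\ 1\,]$ exactly as displayed in the proposition. Next I would record the gradients of $f_1,\dots,f_k$. Since each $f_j$ is a polynomial in $p_0,\dots,p_n$ only (the polynomials defining $X$ do not involve the new variable $p_s$), the submatrix of partials with respect to $p_0,\dots,p_n$ is by definition $\Jac(X)$, while the last column of partials with respect to $p_s$ is identically zero. Stacking these $k$ rows underneath the row for $H$ reproduces the block matrix in the statement.

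There is no substantive obstacle here; the argument is a direct unwinding of the definitions of $X'$, of $H$, and of the Jacobian. The only minor point of care is to match the ordering of rows and columns used in the statement (placing $H$ first and the $p_s$-column last), and to note explicitly that $\partial f_j/\partial p_s=0$ because $p_s$ does not appear in $f_j$.
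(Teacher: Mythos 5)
Your proof is correct and is exactly the direct computation the paper has in mind; the paper states Proposition \ref{jacX} without proof as an immediate consequence of the definition of $X'$, and your entry-by-entry verification (gradient of $H$ on top, $\Jac(X)$ below with a zero $p_s$-column since the $f_j$ do not involve $p_s$) supplies precisely that omitted argument.
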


The important fact about the construction of $X'$ 
is that there is a bijection between the critical points of the function $\lup$ on $X$ and the critical points of the monomial
\[
\luP:=p_0^{u_0}p_1^{u_1}\cdots p_n^{u_n}p_s^{-u_+}\text{ on } X' 
\]
given by Lemma \ref{xprimeBijection}.

By a slight abuse of notation
 the ``$p$'' in  $\lup$  and the ``$p$'' in $\luP$ represent two different things. 
 The first $p$ represents a point $[p_0:p_1:\dots:p_n]\in X$, while the second represents a point $[p_0:p_1:\dots:p_n:p_s]\in X'$.
 
\begin{lemma}\label{xprimeBijection} 
There is a bijection between the critical points of the function $\lup$ on $X$ and the critical points of 
$\luP$~
 on $X'$. 
 Under this bijection,
$[p_0:p_1:\dots :p_n]\in\PP^n$ is a critical point of $\lup$ on $X$ if and only if  
  $[p_0:p_1:\dots :p_n:p_s]\in\PP^{n+1}$ is a critical point of $\luP$ on $X'$.
 \end{lemma}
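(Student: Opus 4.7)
The plan is to produce an explicit isomorphism of projective varieties $\phi\colon X \to X'$ under which $l_u'$ pulls back to $l_u$; once this is in place the bijection of critical points is automatic, because being a critical point of a regular function on the smooth locus is an intrinsic condition preserved by isomorphisms.

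To construct $\phi$, I would first observe that the hyperplane $V(H) \subset \mathbb{P}^{n+1}$ avoids the point $q = [0:\cdots:0:1]$, since $H(q)=1\neq 0$. Hence the linear projection $\mathbb{P}^{n+1}\dashrightarrow \mathbb{P}^n$ from $q$, namely $[p_0:\cdots:p_n:p_s]\mapsto[p_0:\cdots:p_n]$, restricts to an isomorphism $V(H)\xrightarrow{\sim}\mathbb{P}^n$ whose inverse is $[p_0:\cdots:p_n]\mapsto [p_0:\cdots:p_n:p_0+\cdots+p_n]$. Because $X'=V(f_1,\dots,f_k,H)$ and the $f_i$ involve only $p_0,\dots,p_n$, this restricts further to an isomorphism $\phi\colon X \to X'$ sending $[p_0:\cdots:p_n]$ to $[p_0:\cdots:p_n:p_+]$, which is precisely the correspondence the lemma claims.

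Next I would check that $l_u = l_u'\circ \phi$. On $X'$ the relation $H=0$ gives $p_s = p_+$, so
\[
l_u'(\phi(p)) \;=\; p_0^{u_0}\cdots p_n^{u_n}\,p_s^{-u_+} \;=\; p_0^{u_0}\cdots p_n^{u_n}\,p_+^{-u_+} \;=\; l_u(p).
\]
Since $\phi$ is an isomorphism, it sends the smooth locus of $X$ onto the smooth locus of $X'$ and induces an isomorphism $d\phi_p\colon T_pX \to T_{\phi(p)}X'$ at corresponding smooth points. Combined with $l_u'\circ\phi = l_u$, this yields $d(l_u)|_{T_pX} = d(l_u')|_{T_{\phi(p)}X'}\circ d\phi_p$, so one side vanishes if and only if the other does; this is exactly the critical-point condition written in the problem statement.

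I do not anticipate a serious obstacle. The only point requiring a sentence of care is the open condition implicit in the definition of each likelihood function: $l_u$ is regular where $p_0\cdots p_n p_+ \neq 0$, while $l_u'$ is regular where $p_0\cdots p_n p_s \neq 0$, and under $p_s=p_+$ these two open subsets correspond under $\phi$. As an optional sanity check one could redo the argument at the level of matrices: by Proposition \ref{jacX}, the normal space of $X'$ at $\phi(p)$ is spanned by the row $[-1,\dots,-1,1]$ together with $[\Jac(X)\mid 0]$, and one verifies that $[u_0/p_0,\dots,u_n/p_n,-u_+/p_s]$ lies in this span exactly when $[u_0/p_0-u_+/p_+,\dots,u_n/p_n-u_+/p_+]$ lies in the row span of $\Jac(X)$, which reproduces the intrinsic argument.
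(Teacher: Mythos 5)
Your proof is correct, and it takes a more conceptual route than the paper's. The paper works entirely at the level of matrices: it invokes Proposition \ref{jacX} to identify $X'_{reg}$ with $X_{reg}$, stacks $\nabla \luP$ on top of $\Jac(X')$, and multiplies on the right by the invertible matrix $\left[\begin{smallmatrix}1 & & & \\ & \ddots & & \\ & & 1 & \\ 1 & \cdots & 1 & 1\end{smallmatrix}\right]$ to exhibit directly that $\nabla\luP$ lies in the row span of $\Jac(X')$ if and only if $\nabla\lup$ lies in the row span of $\Jac(X)$ (using $p_s=p_+$ to recognize the transformed top row as $[\nabla\lup,\,-u_+/p_+]$). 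You instead package the same change of coordinates as an explicit linear isomorphism $\phi\colon X\to X'$ (projection from $[0:\cdots:0:1]$ restricted to $V(H)$), verify $\luP\circ\phi=\lup$, and then get the bijection of critical points for free from the chain rule, since vanishing of $d(\lup)$ on $T_pX$ is equivalent to orthogonality of the gradient to the tangent space and is preserved under isomorphisms that intertwine the functions. The two arguments are really the same computation in different clothing --- the paper's right-multiplication matrix is the adjoint of your coordinate change --- but your version makes transparent \emph{why} the equivalence holds and isolates the only genuine content (that $\phi$ is an isomorphism carrying one domain of definition to the other), whereas the paper's version has the advantage of producing, with no extra machinery, the explicit row operations one would implement in a computer algebra system. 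Your attention to the open loci where the two likelihood functions are regular, and your closing remark reducing to the matrix check, cover the only points where the intrinsic argument could be accused of hand-waving.
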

\begin{proof}
To prove this we need to show that  
\[
[p_0:\dots:p_n:p_s]\in X'_{reg} \text{ satisfies } \nabla \luP\perp T_pX'
\]
if and only if 
\[
[p_0:\dots:p_n]\in X_{reg} \text{ satisfies } \nabla \lup\perp T_pX.
\]
By Proposition \ref{jacX}, it follows $[p_0:\dots:p_n:p_s]\in X'_{reg}$ if and only if 
$[p_0:\dots:p_n]\in X_{reg}$, so it remains to show that 
$\nabla \luP\perp T_pX'$ if and only if $\nabla \lup\perp T_pX$.
So  we need to show that 
$\nabla \luP$ being in the row space of $\Jac(X')$  implies that
$\nabla \lup$ is in the row space of $\Jac(X)$ and vice versa. 
To see this, observe that  

\[
\left[\begin{array}{c}
\nabla\luP\\
\Jac\left(X'\right)
\end{array}\right]
\left[\begin{array}{cccc}
1 &  &  & \\
 & \ddots &  & \\
 &  & 1 & \\
1 & \cdots & 1 & 1
\end{array}\right]=
\]
\[
\left[\begin{array}{cc}
\begin{array}{cccc}
\frac{u_{0}}{p_{0}}-\frac{u_{+}}{p_{s}} & \frac{u_{1}}{p_{1}}-\frac{u_{+}}{p_{s}} & \cdots & \frac{u_{n}}{p_{n}}-\frac{u_{+}}{p_{s}}\\
0 & 0 & \cdots & 0
\end{array} & \begin{array}{c}
-\frac{u_{+}}{p_{s}}\\
1
\end{array}\\
\Jac\left(X\right) & \begin{array}{c}
0\\
\vdots\\
0
\end{array}
\end{array}\right]
\]
Since $p_s=p_+$, we have completed the proof because the top row in the matrix above is 
$\left[\nabla\lup,-\frac{u_+}{p_+}\right]$.\end{proof}

The {\em conormal variety of} $X$  is defined to be  the Zariski closure in $\mathbb{P}^n\times \mathbb{P}^n$ of the set 
$$N_X:=\overline{\left\{ (p,q):q\perp T_p X_{reg}\right\}}.$$ 
To determine the defining equations of $N_X$, we let $M$ denote a   $(k+1)\times (n+1)$ matrix that is an extended Jacobian whose top row is $[q_0,q_1,\dots,q_n]$ and whose bottom rows are $\Jac(X)$.
The defining equations of the conormal variety can be computed by taking the ideal generated by $f_1,\dots,f_k$ and the $(c+1)\times (c+1)$-minors of  $M$ and saturating by the $c\times c$-minors of $\Jac(X)$. 

The \textit{dual variety} $X^{*}$ is the projection of the conormal variety $N_X$  to the dual projective space $\mathbb{P}^n$ associated to the $q$-coordinates. To compute the equations  of the dual variety, one eliminates the unknowns $p_0,p_1,\dots,p_n$ from the equations defining $N_X$. 
For additional information on computing conormal varieties and dual varieties see \cite{RS13}.

Since $X'$ is contained in a hyperplane defined by $H$, 
 the dual variety of $X'$ is known to be a  cone of $X^{*}$ over the point $h=[-1:-1:\dots:-1:1]$\cite{Ein86} (Proposition 1.1). 
 So $X'^{*}$  in $\mathbb{P}^{n+1}$ is given by  
 $$
 \begin{array}{l}
 X'^{*}=\\
\,\, \overline{
 \left\{[q_0-b_s:q_1-b_s:\dots:q_n-b_s:b_s] :  [q_0:\dots:q_n]\in X^{*}
 \right\} }
\end{array}
 $$

It is easy to go between the coordinates of $X$ and coordinates of $X'$ because there there is birational map between these two varieties.  
But there does not have to be a birational map between $X^{*}$ and $X'^{*}$. 
For this reason, the coordinates of the former are in $q_0,\dots,q_n$, and the coordinates of the latter are in $b_0,\dots,b_n,b_s$. 
Our notation is to let $q$ denote a point $[q_0:q_1:\dots :q_n]\in X^{*}$ and to  let $b$ denote a point~$[b_0:b_1:\dots:b_n:b_s]\in X'^{*}$.

The next proposition shows that  given the defining equations of $X^{*}$ in the unknowns $q_0,\dots,q_n$,  we can determine the defining equations of $X'^{*}$ in the unknowns $b_0,\dots,$ $b_n,b_s$ using the relations
\begin{equation}\label{qbRelate}
q_0=b_0+b_s,q_1=b_1+b_s,\dots,q_n=b_n+b_s.
\end{equation}
Meaning, if $g(q_0,q_1,\dots,q_n)$ vanishes on $X^{*}$, then 
$g(b_0+b_s,b_1+b_s,\dots,b_n+b_s)$ vanishes on $X'^{*}$. 
Moreover,  given the Jacobian of $X^{*}$, we can easily determine the Jacobian of $X'^{*}$ as well using the relations in \eqref{qbRelate}.

\begin{prop}\label{jacXDual}
If $g_1(q),\dots,g_l(q)$  define the variety $X^{*}\subset\PP^{n}$ in  coordinates $q_0,q_1,\dots,q_n$, then 
the defining equations of $X'^{*}$ in coordinates $b_0,b_1,\dots,b_n,b_s$ are 
\[
\begin{array}{c}
g_1(b_0+b_s,b_1+b_s,\dots,b_n+b_s)=0\\
\vdots \\
g_l(b_0+b_s,b_1+b_s,\dots,b_n+b_s)=0.
\end{array}
\] 
Moreover, the Jacobian of $X'^{*}$ is given by 
\[
\Jac \left(X'^{*}\right)=
\left.\Jac\left(X^{*}\right)\right|_{\left(b_{0}+b_{s},\dots,b_{n}+b_{s}\right)}
\left[\begin{array}{ccccc}
1 &  &  &  & 1\\
 & 1 &  &  & \vdots\\
 &  & \ddots &  & 1\\
 &  &  & 1 & 1
\end{array}\right].
\]

\end{prop}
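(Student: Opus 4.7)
The plan is to deduce both assertions from the parametric description
$$X'^{*}=\overline{\left\{[q_0-b_s:q_1-b_s:\dots:q_n-b_s:b_s]:[q_0:\dots:q_n]\in X^{*}\right\}}$$
stated just before the proposition, together with a direct chain-rule computation for the Jacobian.

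For the first part, I would introduce the linear map
$$\Phi:\mathbb{C}^{n+2}\to\mathbb{C}^{n+2},\quad (b_0,\dots,b_n,b_s)\mapsto(b_0+b_s,\dots,b_n+b_s,b_s),$$
which is an invertible linear change of coordinates with inverse $(q_0,\dots,q_n,b_s)\mapsto(q_0-b_s,\dots,q_n-b_s,b_s)$. Projectivizing gives an automorphism of $\mathbb{P}^{n+1}$ that identifies $X'^{*}$ with the cylinder cut out in $q$-coordinates by the pullbacks of $g_1,\dots,g_l$ (with $b_s$ free). Pulling back $g_i(q)=0$ along $\Phi$ yields exactly $g_i(b_0+b_s,\dots,b_n+b_s)=0$, and since $\Phi$ is a bijective linear isomorphism it commutes with Zariski closure, so these equations set-theoretically define $X'^{*}$. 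Homogeneity of each pulled-back polynomial in $(b_0,\dots,b_n,b_s)$ is inherited from the homogeneity of $g_i$ in $q$ since each substitution $q_j=b_j+b_s$ is linear.

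For the Jacobian, write $G_i(b_0,\dots,b_n,b_s):=g_i(b_0+b_s,\dots,b_n+b_s)$ and apply the chain rule. For $j\in\{0,\dots,n\}$,
$$\frac{\partial G_i}{\partial b_j}=\left.\frac{\partial g_i}{\partial q_j}\right|_{q=b+b_s\mathbf{1}},$$
and for the remaining variable,
$$\frac{\partial G_i}{\partial b_s}=\sum_{k=0}^{n}\left.\frac{\partial g_i}{\partial q_k}\right|_{q=b+b_s\mathbf{1}}.$$
Assembling these over $i=1,\dots,l$ produces an $l\times(n+2)$ matrix whose first $n+1$ columns coincide with $\Jac(X^{*})|_{(b+b_s\mathbf{1})}$ and whose final column is the row-sum of that block. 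This is precisely $\Jac(X^{*})|_{(b+b_s\mathbf{1})}$ multiplied on the right by the $(n+1)\times(n+2)$ matrix displayed in the proposition, whose first $n+1$ columns form $I_{n+1}$ and whose last column is the all-ones vector.

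I do not anticipate any serious obstacle. The only mild subtlety is making sure the substitution $q=b+b_s\mathbf{1}$ does not introduce spurious components or fail to match the Zariski closure appearing in the parametric description; this is resolved at once by noting that $\Phi$ is a linear isomorphism and therefore preserves irreducibility, reducedness, and closures.
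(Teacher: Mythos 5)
Your argument is correct and follows essentially the same route as the paper: the defining equations come from the substitution $q_i=b_i+b_s$ dictated by the cone description of $X'^{*}$, and the Jacobian formula is the chain rule applied to $G_i(b)=g_i(b_0+b_s,\dots,b_n+b_s)$. You are merely more explicit than the paper in spelling out the linear isomorphism $\Phi$ and why it preserves Zariski closures, which the paper compresses into ``follows immediately from the relations.''
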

\begin{proof}
The first part of proposition follows immediately from the relations in \eqref{qbRelate}.
By $$\left.\Jac\left(X^{*}\right)\right|_{\left(b_{0}+b_{s},\dots,b_{n}+b_{s}\right)}$$ we mean evaluate the Jacobian of $X^{*}$ at 
${\left(b_{0}+b_{s},\dots b_{n}+b_{s}\right)}$. 
Since the defining equations of $X'^{*}$ are gotten by evaluating each $g_i(q)$ at 
${\left(b_{0}+b_{s},\dots b_{n}+b_{s}\right)}$, it follows by the chain rule that $\Jac(X'^{*})$ equals the desired matrix product. \end{proof}

\begin{example}\label{example:babyDualAndConormal}
Consider  $X$ in $\PP^3$, a variety  defined by 
$$f=2p_0p_1p_2+p_1^2p_2+p_1p_2^2-p_0^2p_{12}+p_1p_2p_{12}.$$
The Jacobian of $X$ and the defining polynomial $g(q)$ of the dual variety $X^{*}$ are 
\[
\begin{array}{r}
\Jac(X)=[2p_1p_2-2p_0p_{12},2p_0p_2+2p_1p_2+p_2^2+p_2p_{12},\\
 2p_0p_1+p_1^2+2p_1p_2+p_1p_{12},-p_0^2+p_1p_2]
\end{array}
\]
and
\[
\begin{array}{r}
g(q)=q_0^4-8q_0^2q_1q_2+16q_1^2q_2^2-8q_0^3q_{12}+\\16q_0^2q_1q_{12}+
16q_0^2q_2       q_{12}-32q_0q_1q_2q_{12}.
\end{array}
\]
The variety  $X'$ is defined by the two equations,
$$f(p)=0\text{ and }p_s=p_0+p_1+\cdots+p_n,$$
but the dual variety $X'^{*}$ is defined  by one equation
\[
\begin{array}{c}
g(b_0+b_s,b_1+b_s,b_2+b_s,b_{12}+b_s)=\\
(b_0+b_s)^4-8(b_0+b_s)^2(b_1+b_s)(b_2+b_s)+\\
16(b_1+b_s)^2(b_2+b_s)^2-8(b_0+b_s)^3(b_{12}+b_s)+\\
16(b_0+b_s)^2(b_1+b_s)(b_{12}+b_s)+\\
16(b_0+b_s)^2(b_2+b_s)       (b_{12}+b_s)\\
-32(b_0+b_s)(b_1+b_s)(b_2+b_s)(b_{12}+b_s).
\end{array}
\]
The Jacobian of $X^{*}$ is 
$$
\left[
\begin{array}{c}
4q_0^3-16q_0q_1q_2-32 q_{12}(\frac{3}{4}q_0^2-q_0q_1-q_0q_2+q_1q_2)\\
-8q_0^2q_2+32q_1q_2^2+16q_0^2q_{12}-32q_0q_2q_{12}\\
-8q_0^2q_1+32q_1^2q_2+16q_0^2q_{12}-32q_0q_1q_{12}\\
-8q_0^3+16q_0^2q_1+16q_0^2q_2-32q_0q_1q_2
\end{array}
\right]^T.
$$
We get the Jacobian of $X'^{*}$ by evaluating $\Jac(X^{*})$ at \break
$\left(b_{0}+b_{s},\dots b_{n}+b_{s}\right)$ and multiplying the evaluated $\Jac(X^{*})$ 
on the right by 
the matrix 
\[
\left[\begin{array}{ccccc}
1 &  &  &  & 1\\
 & 1 &  &  & 1\\
 &  & 1 &  & 1\\
 &  &  & 1 & 1
\end{array}\right].
\]

\end{example}

Now we are ready to state our first result.

\begin{thm}\label{thm:projMLD}
Fix an algebraic statistical model $X$.   A point 
$$\big([p_0:p_1:\dots:p_n:p_s],[b_0:b_1:\dots:b_n:b_s]\big)\in N_{X'}$$
satisfies the relation 
\[
\left[p_0b_0:p_1b_1:\dots:p_nb_n:p_{s}b_s\right]=\left[u_0:u_1:\dots:u_n:-u_{+}\right]
\]
if and only if 
$[p_0:p_1:\dots :p_n:p_s]$ is a critical point of $\luP=p_0^{u_0}p_1^{u_1}\cdots p_n^{u_n}p_s^{-u_+}$~on~$X'$. 


\end{thm}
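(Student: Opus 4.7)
The plan is to unpack both sides of the stated equivalence in terms of the explicit gradient of $l_u'$. First I would compute
\[
\nabla l_u'(p) = l_u'(p) \cdot \left[\frac{u_0}{p_0},\, \frac{u_1}{p_1},\, \ldots,\, \frac{u_n}{p_n},\, -\frac{u_+}{p_s}\right],
\]
noting that the implicit condition $p_0 p_1 \cdots p_n p_s \neq 0$, inherited from the definition of $l_u'$, guarantees that the scalar factor $l_u'(p)$ is nonzero. Since $l_u'$ is homogeneous of degree zero (because $u_0 + \cdots + u_n - u_+ = 0$), this gradient represents a well-defined point in the dual projective space $\mathbb{P}^{n+1}$.

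Next, I would observe that the relation
\[
[p_0 b_0 : p_1 b_1 : \cdots : p_n b_n : p_s b_s] = [u_0 : u_1 : \cdots : u_n : -u_+]
\]
can be rewritten, by dividing the $i$-th coordinate by the nonzero $p_i$ (and the last by $p_s$), as
\[
[b_0 : b_1 : \cdots : b_n : b_s] = \left[\frac{u_0}{p_0} : \frac{u_1}{p_1} : \cdots : \frac{u_n}{p_n} : -\frac{u_+}{p_s}\right].
\]
In other words, the relation in the theorem asserts exactly that $b$ is projectively equal to $\nabla l_u'(p)$.

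Finally, I would invoke the definition of the conormal variety: at a regular point of $X'$, $(p,b) \in N_{X'}$ means precisely that $b \perp T_p X'$. Combining with the previous step, a pair $(p,b) \in N_{X'}$ satisfies the displayed relation if and only if $\nabla l_u'(p) \perp T_p X'$, which is by definition the condition that $p$ be a critical point of $l_u'$ on $X'$. For the backward direction (assuming $p$ is critical), one simply takes $b := \nabla l_u'(p)$ to produce a witness pair in $N_{X'}$ satisfying the relation.

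The only delicate point is handling the Zariski closure built into the definition of $N_{X'}$: since the MLE setup restricts attention to the dense open locus where $p_0 \cdots p_n p_s \neq 0$ and $p \in X'_{reg}$, none of the candidate critical points lie on the closure-boundary, and the equivalence is undisturbed. I expect this bookkeeping to be the only nontrivial aspect of the proof; the algebra itself is a one-line identification of the gradient with the coordinates of $b$.
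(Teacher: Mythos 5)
Your proposal is correct and follows essentially the same route as the paper's proof: identify $\nabla l_u'(p)$ as the projective point $[u_0/p_0:\dots:-u_+/p_s]$ on the locus $p_0\cdots p_np_s\neq 0$, and observe that the coordinate-wise product relation says exactly that $b=\nabla l_u'(p)$ in $\mathbb{P}^{n+1}$, so membership in $N_{X'}$ becomes the orthogonality condition defining a critical point. Your extra remarks on degree-zero homogeneity and on the Zariski-closure boundary are sound bookkeeping that the paper leaves implicit.
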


\begin{proof}
To determine  critical points of $\luP$ on $X'$ we
find when 
$$\nabla \luP=\left[\partial l_{u}'/\partial p_{0}:\dots:\partial l_{u}'/\partial p_{s}\right]$$
is orthogonal to the tangent space of $X'$ at the point $p$. 
This
is the same as determining~when 
$$\big([p_0:p_1:\dots:p_s],\nabla \luP\big)\in N_{X'}.$$
As a point in projective space, we have 
$$\nabla \luP=\left[\frac{u_{0}}{p_{0}},\dots,\frac{u_{n}}{p_{n}},-\frac{u_{+}}{p_{s}}\right]$$
whenever $p_{0}p_{1}\cdots p_{s}\neq0$. So we immediately have that a
critical point of $\luP$  satisfies the desired relations when we take the coordinate-wise product of $[p_0:p_1:\dots:p_s]$ and~$\nabla \luP$. \end{proof}

With Lemma \ref{xprimeBijection}, Theorem \ref{thm:projMLD} says that if 
$[p,b]\in N_{X'}$ and the coordinate-wise product of $p$ and $b$ is
\begin{equation}\label{pbu}
\left[p_0b_0:\dots:p_nb_n:p_{s}b_s\right]=\left[u_0:\dots:u_n:-u_{+}\right],
\end{equation} 
then $[p_0:\dots :p_n]$ is a critical point of $\lup$ on $X$.

\begin{defn} \label{def:ll} 
The $\emph{likelihood locus of}$ $X$ for the data $u$ is defined as the set of points in $N_{X'}$ satisfying the relations in  \eqref{pbu}, notated $L_X(u)$. 
We define $\cPu$ and $\cBu$ to be 
$$\cPu:=\{p:(p,b)\in L_X(u) \}
\text{ and } \cBu:=\{b:(p,b)\in L_X(u) \}.$$
\end{defn}

  For additional clarification, note that points in $L_X(u)$ are contained in the conormal variety $N_{X'}\subset\PP^{n+1}\times\PP^{n+1}$.
These points  are expressed as 
$$(p,b)=\big([p_0:p_1:\dots:p_s],[b_0:b_1:\dots:b_s]\big)\in L_X(u).$$
In regards to ML degree, we have for generic choices of $u$ 
$$\MLdegree(X)=\#L_X(u)=\#\cPu=\#\cBu.$$

There are two corollaries to Theorem \ref{thm:projMLD}.
The first corollary gives a bijection between critical points of $\luP$ on $X'$ and critical points of $\luB$ on $X'^{*}$. 
The second corollary gives equations to determine critical points of $\luP$ on $X'$.

\begin{cor}\label{bijectionPB}
There is a bijection between critical points of $\luP$ on $X'$ and critical points of $\luB$ on $X'^{*}$ given by 
$$\left[p_0b_0:p_1b_1:\dots:p_nb_n:p_{s}b_s\right]=\left[u_0:u_1:\dots:u_n:-u_{+}\right].$$
Moreover, the product $\luP\luB$ remains constant over the set of critical points. 
\end{cor}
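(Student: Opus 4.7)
The plan is to combine Theorem \ref{thm:projMLD} with biduality of conormal varieties, which asserts that $(p,b)\in N_{X'}$ if and only if $(b,p)\in N_{X'^{*}}$ (equivalently, $X'^{**}=X'$ after swapping factors). This symmetry is what makes Theorem \ref{thm:projMLD} applicable a second time, with $X'$ replaced by $X'^{*}$, allowing the same coordinate-wise product relation to detect critical points on each side.

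First I would invoke Theorem \ref{thm:projMLD} directly to identify the set of critical points of $l_u'(p)$ on $X'$ with $\cPu$. Next I would use biduality to reinterpret each pair $(p,b)\in L_X(u)\subset N_{X'}$ as an element $(b,p)\in N_{X'^{*}}$. Because the relation
\[
[p_0b_0:\dots:p_nb_n:p_sb_s]=[u_0:\dots:u_n:-u_{+}]
\]
is symmetric in $p$ and $b$, rerunning the proof of Theorem \ref{thm:projMLD} with the roles of the variety and its dual exchanged shows that $[b_0:\dots:b_s]$ is a critical point of $\luB=b_0^{u_0}\cdots b_n^{u_n}b_s^{-u_+}$ on $X'^{*}$ precisely when the same relation holds. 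Thus the map $(p,b)\mapsto(b,p)$ on $L_X(u)$ supplies the claimed bijection between critical points of $\luP$ on $X'$ and critical points of $\luB$ on $X'^{*}$.

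For the constancy claim I would choose affine representatives so that $p_ib_i=u_i$ for $i=0,\dots,n$ and $p_sb_s=-u_{+}$. Then a direct computation gives
\[
\luP\cdot\luB=\prod_{i=0}^{n}(p_ib_i)^{u_i}\cdot(p_sb_s)^{-u_{+}}=\prod_{i=0}^{n}u_i^{u_i}\cdot(-u_{+})^{-u_{+}},
\]
which depends only on $u$. Well-definedness on projective points is ensured by the fact that $\luP$ and $\luB$ are Laurent monomials of total degree $u_0+\cdots+u_n-u_{+}=0$, so rescaling either representative does not change the product. The main subtlety, which I expect to be the only real obstacle, is justifying the symmetric application of Theorem \ref{thm:projMLD}: one must check that the biduality identification between $N_{X'}$ and $N_{X'^{*}}$ sends the likelihood locus to the likelihood locus, and that for generic $u$ all critical points lie in the smooth loci of both varieties so that the orthogonality formulation is valid on each side.
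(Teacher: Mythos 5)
Your proposal is correct and follows essentially the same route as the paper: the paper's (much terser) proof observes that the product relation forces $p$ to equal the gradient of $\luB$, implicitly invoking the biduality $N_{X'}=N_{X'^{*}}$ (with factors swapped) that you spell out, and it computes $\luP\luB=u_0^{u_0}\cdots u_n^{u_n}(-u_+)^{-u_+}$ exactly as you do. Your added remarks on the degree-zero homogeneity making the product well defined on projective representatives, and on the smooth-locus caveat, are reasonable elaborations of points the paper leaves implicit.
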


\begin{proof}
The first part follows by noticing that the relation forces us to have 
$$[p_0:p_1:\dots:p_s]=[u_0/b_0:u_1/b_1:\dots:-u_+/b_s]$$ 
which is also the gradient of  $\luB$.
The second part follows as 
$$\luP\luB=u_0^{u_0}u_1^{u_1}\cdots u_n^{u_n}(-u_+)^{-u_{+}}.$$
\end{proof}

When $u_0,\dots,u_n$ are positive integers, the bijection in  Corollary \ref{bijectionPB} 
pairs positive critical points of $\luP$ ordered by increasing  likelihood with positive critical points of $\luB$ ordered by decreasing likelihood!  

\begin{example}\label{rcmodel}
We will compute the ML degree of  $X$ in Example \ref{example:babyDualAndConormal} to be $3$.
We fix the data vector $\left(u_0,u_1,u_2,u_{12}\right)=\frac{1}{40}\left(2,13,5,20\right)$, and determine the points of $L_X(u)$
\[
\begin{array}{ccccc}
p_0&p_1&p_2&p_{12}&p_s\\
 .167493 & .242186 & .0532836 & .537037 &1\\ 
-.485608 & .632011& .35886& .494736 &1\\ 
-2.58189 & 5.56009& 6.19312& -8.17133 &1\\
b_0&b_1&b_2&b_{12}&b_s\\
.29852 & 1.34194 & 2.34594 & .931035&-1\\
-.102964 & .514232 & .348325& 1.01064&-1\\
-.0193657 & .0584523 & .0201837& -.0611895&-1.
\end{array}
\]
The eliminants for $p_0,p_1,p_2$, and $p_{12}$ are
\[
\begin{array}{c}
(100p_0^3+290p_0^2+74p_0-21),\\
(62700p_1^3-403430p_1^2+314358p_1-53361),\\
(1900p_2^3-12550p_2^2+4886p_2-225),\\
(62700p_{12}^3+447650p_{12}^2-511962p_{12}+136125).
\end{array}
\] 
The eliminants for $b_0,b_1,b_2,b_{12}$ of $L_X(u)$ are 
\[
\begin{array}{c}
(1680b_0^3-296b_0^2-58b_0-1),\\
        (34151040b_1^3-65386464b_1^2+27271868b_1-1377519),\\
        (28800b_2^3-78176b_2^2+25100b_2-475),\\
        (272250b_{12}^3-511962b_{12}^2+223825b_{12}+15675).
\end{array}
\]        
\end{example}

Note that  we are \textit{not} saying that the ML degree of $X$ equals the ML degree of $X^{*}$.
In general, 
$$\MLdegree(X)\neq\MLdegree(X^{*}).$$ 
The reason why equality fails is because 
$$b_0+b_1+\cdots+b_n-b_s$$ 
does not  vanish on $X'^{*}$. So there is no analogue of Lemma \ref{xprimeBijection} involving $X'^{*}$ and $X^{*}$. 
In terms of  previous literature,  one should think  of Corollary \ref{bijectionPB}
as a generalization of Theorem 2 of \cite{DR13}.
  
\begin{cor}\label{cor:mlequations}
Fix a point $[p,b]$ of  $N_{X'}$ such that $p_sb_s\neq 0$. The following are equivalent:
\begin{enumerate}
\item\label{mlequations1}
The point $[p,b]$  is in $L_X(u)$.
\item\label{mlequations2}
For   $i=0,1,2,\dots,n$, the point $[p,b]$ satisfies 
\[
 u_ip_sb_s=-u_+p_ib_i.
\]
\item\label{mlequations3}
There  exists $[q_0:\dots:q_n]\in X^{*}$ such that
for  $i=0,1,\dots,n$
\[
u_ip_sb_s=-u_+p_i(q_i-b_s). 
\]
\end{enumerate}
\end{cor}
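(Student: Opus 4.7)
The plan is to prove the three conditions equivalent by verifying the chain \eqref{mlequations1}~$\Leftrightarrow$~\eqref{mlequations2}~$\Leftrightarrow$~\eqref{mlequations3}, using only the definition of $L_X(u)$ and the description of $X'^{*}$ as a cone over $X^{*}$ with vertex $h=[-1{:}\dots{:}-1{:}1]$ recalled just before Proposition \ref{jacXDual}. No new heavy machinery is needed, so I expect the proof to be mostly an unwinding of definitions; the only subtle point is passing between projective and affine equalities, which is where the hypothesis $p_s b_s\neq 0$ is used.

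First I would argue \eqref{mlequations1}~$\Leftrightarrow$~\eqref{mlequations2}. By Definition \ref{def:ll}, a point $(p,b)\in N_{X'}$ lies in $L_X(u)$ exactly when the coordinate-wise product equals $[u_0{:}\dots{:}u_n{:}-u_{+}]$ as a point of $\PP^{n+1}$, i.e.\ when there is a scalar $\lambda$ with $p_ib_i=\lambda u_i$ for $i=0,\dots,n$ and $p_sb_s=-\lambda u_{+}$. Since $p_sb_s\neq 0$ we may solve $\lambda=-p_sb_s/u_{+}$ (using $u_{+}\neq 0$) and substitute back to obtain the affine system $u_i p_s b_s = -u_{+} p_i b_i$ for each $i=0,\dots,n$. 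Conversely, these $n+1$ equalities together with $p_s b_s \neq 0$ force the projective point $[p_0b_0{:}\dots{:}p_nb_n{:}p_sb_s]$ to coincide with $[u_0{:}\dots{:}u_n{:}-u_{+}]$, giving \eqref{mlequations1}.

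Next I would establish \eqref{mlequations2}~$\Leftrightarrow$~\eqref{mlequations3}. Because $(p,b)\in N_{X'}$, the projection of $b$ lies in $X'^{*}$, and by the cone description recalled earlier, every $b\in X'^{*}$ can be written as $b_i=q_i-b_s$ for $i=0,\dots,n$ with some $[q_0{:}\dots{:}q_n]\in X^{*}$ (and $b_s$ the last coordinate). Substituting $b_i=q_i-b_s$ in the equalities of \eqref{mlequations2} yields \eqref{mlequations3}, and conversely, given a $q\in X^{*}$ as in \eqref{mlequations3} one recovers \eqref{mlequations2} by setting $b_i=q_i-b_s$; one should also note that such a $q$ is indeed the image of $b$ under the cone projection, so the existential in \eqref{mlequations3} is compatible with the data $b$ fixed at the start.

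The only real thing to watch is to invoke $p_s b_s\neq 0$ (and $u_{+}\neq 0$) at the exact moment we convert a projective equality into a family of scalar equalities, so that the single scalar $\lambda$ is well-defined and nonzero. Once that is done, both equivalences reduce to routine substitutions, and no further step should present difficulty.
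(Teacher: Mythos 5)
Your proposal is correct and follows the same route as the paper: the paper also treats \eqref{mlequations1}$\Leftrightarrow$\eqref{mlequations2} as an immediate unwinding of the projective equality in Definition \ref{def:ll}, and obtains \eqref{mlequations2}$\Leftrightarrow$\eqref{mlequations3} from the relation $q_i=b_i+b_s$ coming from the cone description of $X'^{*}$. You simply spell out the scalar $\lambda$ and the role of $p_sb_s\neq 0$ more explicitly than the paper does.
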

\begin{proof}
It is immediate that parts \ref{mlequations1} and \ref{mlequations2} are equivalent.  
To see \ref{mlequations2} and \ref{mlequations3} are equivalent, 
 recall $q_i=b_i+b_s$ for $i=0,1,\dots,n$, from the definition of $X'^{*}$.
\end{proof}
A consequence of these equations is that it removes the need for saturation by  $p_0p_1\cdots p_n$ with  Grobner basis computations that involve the likelihood equations  whenever the $u_i$ are nonzero. In addition, if we restrict to the affine charts defined by  $p_s=1$ and $b_s=-u_+$, then the condition $p_sb_s\neq0$ is immediately satisfied.

\section{Dual Likelihood Equations}
In this section we will define a system of  equations whose solutions are precisely
$${\cBu}=\left\{b: (p,b)\in L_X(u)\right\}.$$
Once we know the set  $\cBu$, we  determine the critical points of $\lup=p_0^{u_0}\cdots p_n^{u_n}/p_+^{u_+}$ on $X$ 
using Lemma \ref{xprimeBijection} and Corollary \ref{cor:mlequations}.
Concretely, if 
$b\in\cBu$ then  
$$[p_0:\dots:p_n]=[{u_0}/{b_0}:\dots:{u_n}/{b_n}]$$ is a critical point of $\lup$ on $X$. 
For this reason we make the following definition. 

\begin{defn}
The \textit{dual maximum likelihood estimation problem} for the algebraic statistical model $X$ and data $u$ is to determine ${\cBu}$, the set of  critical points of $\luB$~on~$X'^{*}$.  
\end{defn}

By Corollary \ref{bijectionPB},  we find the critical points of $\luB=b_0^{u_0}b_1^{u_1}\cdots b_n^{u_n}b_s^{-u_+}$ on $X'^{*}$
to determine the set $\cBu$. That is, we determine the points $b\in X'^{*}$ such that the~gradient
$$\nabla \luB=\left[\frac{u_0}{b_0}:\frac{u_1}{b_1}:\dots:\frac{u_n}{b_n}:\frac{-u_+}{b_s}\right]$$ 
is orthogonal to the tangent space of $X'^{*}$ at $b$. 

If $X^{*}$ has codimension $c$, which also means $X'^{*}$ has codimension $c$,  then the {\em dual likelihood equations} are obtained by taking the sum of ideals generated by 
\begin{itemize}
\item
the polynomials defining $X'^{*}$, and
\item 
the $(c+1)\times (c+1)$ minors of an extended Jacobian multiplied by a diagonal matrix with entries $b_0,b_1,\dots,b_n,b_s$,
\begin{equation}\label{gjb}
\left[\begin{array}{c}
\nabla l_{u}\left(b\right)\\
\Jac\left(X'^{*}\right)
\end{array}\right]
\left[\begin{array}{ccc}
b_{0}\\
 & \ddots\\
 &  & b_{s}
\end{array}\right],
\end{equation}
\end{itemize}
and saturating by the product of two ideals,
\begin{itemize}
\item
the principal ideal generated by $b_0b_1\cdots b_nb_s$, and 
\item the ideal generated by the $c\times c$-minors of $\Jac(X'^{*}).$
\end{itemize}
This gives us a formulation of the  dual likelihood equations. Now we make some simplifications to these equations to get Theorem \ref{dualEq}.

By Euler's relations of partial derivatives the columns of the matrix product in \eqref{gjb} are linearly dependent. Indeed the columns sum to zero, so we may drop the last column of the  product without changing the rank.  

By Proposition \ref{jacXDual}, if  $g_1(q),\dots,g_l(q)$ define the variety $X^{*}$, then the defining equations of $X'^{*}$ are 
$$
\begin{array}{c}
g_1(b_0+b_s,b_1+b_s,\dots,b_n+b_s)=0\\
\vdots \\
g_l(b_0+b_s,b_1+b_s,\dots,b_n+b_s)=0.
\end{array}
$$
  and the Jacobian of $X'^{*}$ is 
\[
\Jac \left(X'^{*}\right)=
\left.\Jac\left(X^{*}\right)\right|_{\left(b_{0}+b_{s},\dots b_{n}+b_{s}\right)}
\left[\begin{array}{ccccc}
1 &  &  &  & 1\\
 & 1 &  &  & \vdots\\
 &  & \ddots &  & 1\\
 &  &  & 1 & 1
\end{array}\right].
\]
Since the last column of $\Jac(X'^{*})$ is the sum of the first columns, it follows the dual likelihood equations can be reformulated by the next theorem. 

\begin{thm}\label{dualEq}

Let $g_1(q),\dots,g_l(q)$ define $X^{*}\subset\mathbb{P}^n$ with codimension $c$. Then, $\cBu$ is variety of the ideal calculated by taking the sum of the ideals generated by 
\begin{itemize}
\item
$g_1(b_0+b_s,\dots,b_n+b_s),
\dots,
g_l(b_0+b_s,\dots,b_n+b_s)
$
 and
 \item the $(c+1)\times(c+1)$ minors of 
 \begin{equation}\label{rankDefDual}
\left[\begin{array}{c}
\frac{u_{0}}{b_{0}}\quad \frac{u_{1}}{b_{1}}\quad  \dots \quad \frac{u_{n-1}}{b_{n-1}}\quad \frac{u_{n}}{b_{n}}\\
\\
\quad\quad\quad  \left.\Jac\left(X^{*}\right)\right|_{\left(b_{0}+b_{s},\dots b_{n}+b_{s}\right)}
\end{array}\right]
\left[\begin{array}{ccc}
b_{0}\\
 & \ddots\\
 &  & b_{n}
\end{array}\right],
\end{equation}
\end{itemize}
and saturating by 
the product of two ideals,
\begin{itemize}
\item
the principal ideal generated by $b_0b_1\cdots b_nb_s$, and 
\item the ideal of $c\times c$-minors of $\left.\Jac\left(X^{*}\right)\right|_{\left(b_{0}+b_{s},\dots b_{n}+b_{s}\right)}.$
\end{itemize}

\end{thm}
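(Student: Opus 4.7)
The plan is to start from the ``raw'' dual likelihood equations stated in the paragraphs just before the theorem and perform two simplifications justified by (i) Euler's homogeneity relation applied to the defining equations of $X'^{*}$, and (ii) the factorization of $\Jac(X'^{*})$ from Proposition \ref{jacXDual}. What must be shown is that these simplifications preserve the saturated ideal, so that the variety it cuts out is still $\cBu$.

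First I would multiply $\diag(b_0,\dots,b_s)$ into the top row of \eqref{gjb} to obtain $[u_0,\dots,u_n,-u_+]$, whose entries sum to zero. Writing $G_j(b):=g_j(b_0+b_s,\dots,b_n+b_s)$ for the defining equations of $X'^{*}$, I would then compute the column-sum of each Jacobian row of $\Jac(X'^{*})\cdot\diag(b_0,\dots,b_s)$; by Proposition \ref{jacXDual} and Euler's identity applied to $g_j$ this sum equals $\deg(g_j)\cdot G_j(b)$, which lies in the ideal generated by $G_1,\dots,G_l$. Hence modulo that ideal the last column of the product in \eqref{gjb} is the negative of the sum of the first $n+1$ columns, so the ideals of $(c+1)\times(c+1)$ minors before and after deleting the last column generate the same ideal after adding $G_1,\dots,G_l$. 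Once the last column is removed, Proposition \ref{jacXDual} directly identifies the surviving matrix with \eqref{rankDefDual}, because the first $n+1$ columns of the $(n+1)\times(n+2)$ substitution matrix in that proposition form the identity.

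Next I would check that the saturations also agree. The principal ideal $(b_0\cdots b_nb_s)$ is unchanged. For the $c\times c$ minors, Proposition \ref{jacXDual} writes $\Jac(X'^{*})=\Jac(X^{*})|_{(b_0+b_s,\dots,b_n+b_s)}\cdot M$ with $M=[\,I_{n+1}\mid\mathbf{1}\,]$. Since the first $n+1$ columns of $M$ form the identity, every $c\times c$ minor of $\Jac(X^{*})|_{(\cdots)}$ appears verbatim as a $c\times c$ minor of $\Jac(X'^{*})$; conversely, Cauchy--Binet expresses every $c\times c$ minor of $\Jac(X'^{*})$ as a polynomial combination of $c\times c$ minors of $\Jac(X^{*})|_{(\cdots)}$. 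Thus the two minor ideals coincide, and saturating the simplified ideal by their product with $(b_0\cdots b_nb_s)$ yields the same ideal as saturating the raw dual likelihood equations, namely the defining ideal of $\cBu$.

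The main obstacle I expect is bookkeeping rather than a hidden conceptual step: the Euler argument only lets me drop the last column \emph{modulo} the ideal $(G_1,\dots,G_l)$, so care is needed to verify that this ``modulo'' commutes with taking $(c+1)\times(c+1)$ minors of the full matrix before being summed with $(G_1,\dots,G_l)$. The saturation step similarly requires equality of the minor ideals rather than mere containment, which is the reason both directions of the Cauchy--Binet comparison must be recorded. Once these two ideal-theoretic checks are in hand, the desired reformulation follows.
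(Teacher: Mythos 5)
Your proposal is correct and follows essentially the same route as the paper, which derives the theorem in the paragraphs immediately preceding its statement: start from the raw dual likelihood equations built from \eqref{gjb}, use Euler's relation to drop the last column modulo the ideal of $X'^{*}$, and use Proposition \ref{jacXDual} to identify the surviving matrix with \eqref{rankDefDual}. You are in fact somewhat more careful than the paper in spelling out that the two saturating minor ideals coincide and that dropping the last column modulo $(G_1,\dots,G_l)$ is compatible with taking $(c+1)\times(c+1)$ minors.
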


The point of Theorem \ref{dualEq} is that the dual likelihood equations define a homogeneous ideal in the polynomial ring $\quad$
$\mathbb{C}[b_0,b_1,\dots,b_n,b_s]$ whose variety is $\cBu$, the set of critical points of $\luB$ on $X'^{*}$.
Theorem \ref{dualEq} can be used to determine the ML degree of $X$ because 
$\#L_X(u)=\#\cBu$.

Since Theorem \ref{dualEq} is constructive, we express it below as an algorithm.

\begin{algorithm}\label{alg1}
Suppose $X^{*}$  in $\PP^n$ has codimension $c$. 
\begin{itemize}
\item \textbf{Input:}  Polynomials $g_1(q),g_2(q),\dots,g_l(q)$ defining $X^{*},$
and a vector $u\in \N^{n+1}$.
\item \textbf{Output:} The ML degree of $X$.
\item \textbf{Procedure:} 

Step 1. Let $G_q$ be the ideal generated by $g_1(q),\dots,g_l(q)$, and let $G_b$ be the ideal 
obtained by substituting $q_0,\dots,q_n$ for $b_0+b_s,\dots,b_n+b_s$, respectively, in the 
ideal~$G_q$. 

Step 2. Let $M_{b,u}$ denote the $(c+1)$-minors of \eqref{rankDefDual}.

Step 3. Let $S_b$ be the ideal 
 generated by the $c\times c$ minors of  
$\left.\Jac\left(X^{*}\right)\right|_{\left(b_{0}+b_{s},\dots b_{n}+b_{s}\right)}.$

Step 4. Let $W_{b,u}$ be the saturation 
$$(M_{b,u}+G_b):(b_0b_1\cdots b_nb_s\cdot S_b)^\infty$$

Step 5. Return the degree of $W_{b,u}$. 

\end{itemize}
\end{algorithm}

\begin{example}
Let $X$ be defined by $f(p)=4p_{0}p_{2}-p_{1}^2$ in $\PP^{2}$.
Then $X^{*}$ is defined by $g(q)=q_{0}q_{2}-q_{1}^2$ in the $\PP^2$.
So
\[
f(p)=\det \left[\begin{array}{cc}
2p_{0} & p_{1}\\
p_{1} & 2p_{2}
\end{array}\right]
\text{ and }
g(q)=\det \left[\begin{array}{cc}
q_{0} & q_{1}\\
q_{1} & q_{2}
\end{array}\right].
\]
The ML degree of $X$ is  computed by taking the ideal generated by 
\begin{itemize}
\item 
$g(b_0+b_s,b_1+b_s,b_2+b_s)=
(b_{0}+b_s)(b_{2}+b_s)-(b_{1}+b_s)^2,\text{ and }$
\item
 $2\times 2$ minors of 
$$\left[\begin{array}{ccc}
\frac{u_{0}}{b_{0}} & \frac{u_{1}}{b_{1}} & \frac{u_{2}}{b_{2}} \\
(b_{2}+b_s) & -2(b_{1}+b_s) & (b_{0}+b_s)
\end{array}\right]\left[\begin{array}{ccc}
b_{0}\\
 & b_1\\
 &  & b_{2}
\end{array}\right]$$
\end{itemize}
 and saturating by the product of two ideals 
\begin{itemize}
\item
the principal ideal $(b_0b_1b_2b_s)$
\text{ and }
\item 
the $1\times 1$ minors of 
$$\left[\begin{array}{ccc}
(b_{2}+b_s) & -2(b_{1}+b_s) & (b_{0}+b_s)
\end{array}\right].$$
\end{itemize}
We find  that there is a unique critical point of $l'_u(b)$ on $X'^{*}$  whose coordinates can be derived from the matrix equality
$$\frac{1}{b_{s}}\left[\begin{array}{cc}
b_{0} & b_{1}\\
b_{1} & b_{2}
\end{array}\right]=\left[\begin{array}{cc}
\frac{4u_0u_+}{(2u_0+u_1)^2} & \frac{4u_1u_+}{2(u_1+2u_2)(2u_0+u_1)}\\
 \frac{4u_1u_+}{2(u_1+2u_2)(2u_0+u_1)} &\frac{4u_2u_+}{(2u_2+u_1)^2}
\end{array}\right].$$
So by Corollary \ref{cor:mlequations}, the critical point of $\lup$ on $X$ is given by 
\[
\frac{1}{p_{s}}\left[\begin{array}{cc}
2p_{0} & p_{1}\\
p_{1} & 2p_{2}
\end{array}\right]=
\frac{1}{2u_+^2}\left[\begin{array}{c}
(2u_0+u_1)\\
(u_1+2u_2)
\end{array}\right]
\left[\begin{array}{c}
(2u_0+u_1)\\
(u_1+2u_2)
\end{array}\right]^T.
\]
\end{example}

\subsection{Experimental Results}
In this section, we compare the standard formulation of solving the likelihood equations, Algorithm 6 of \cite{HKS05}, to the dual formulation presented here, Algorithm \ref{alg1}.
All computations in this subsection  were done on a 2.8 GHz Intel Core i7 MacBook Pro using \texttt{Macaulay2}. 
$$
\begin{array}{ccl}
I_1&=&\langle 
q_0^2+2q_1^2+3q_2^2+5q_3^2
\rangle \\
I_2&=&\langle 
{q_2^2-q_1q_3, q_1q_2-q_0q_3, q_1^2-q_0q_2}
\rangle \\
{I_3}&=&\langle
q_0^3+q_1^3+q_2^3+q_3^3
\rangle \\
I_4&=&\langle 
30q_0^2+15q_1^2+10q_2^2+6q_3^2
\rangle\\
I_5&=&\langle
{q_1^2q_2^2-4q_0q_2^3-4q_1^3q_3+18q_0q_1q_2q_3-27q_0^2q_3^2}
\rangle\\
I_6&=&\langle q_0^4+q_1q_2q_3^2-q_4^4\rangle\\
I_7&=&2\times2 \minors\text{ of }
\left[ \begin{array}{c}
    2q_{11},q_{12},q_{13} \\
    q_{12},2q_{22},q_{23}\\
    q_{13},q_{23},2q_{33}
    \end{array}\right]        \\
I_8&=&2\times2 \minors\text{ of }
\left[ \begin{array}{c}
    q_{11},q_{12},q_{13} \\
    q_{12},q_{22},q_{23}\\
    q_{13},q_{23},q_{33}
    \end{array}\right]        \\
I_{10}&=&\left\langle \det
\left[ \begin{array}{c}
    q_{0},q_{1},q_{2} \\
    q_{1},q_{2},q_{3}\\
    q_{2},q_{3},q_{4}
    \end{array}\right]       
\right\rangle
\end{array}
$$

The second column in the table below is a list of ML degrees of varieties whose dual variety is given by the first column.  
The third column is the time (in seconds) it takes to calculate the ML degree using the standard formulation, while the fourth column is the time (in seconds) it takes to calculate the ML degree using the dual likelihood equations. 

\[
\begin{array}{ccrr}
X^{\star} & \text{ML degree}&  \text{Standard} & \text{Dual } \\
         I_1 & 14                    & 0.008            & 0.047     \\
         I_2 &   4                    &  0.062             & 0.062  \\  	 
	\textbf{I}_3  &   \textbf{57}			 & \textbf{166.872}	         & \textbf{1.447}    \\
         I_4 & 14                    & 0.038             & 0.042  \\  
         I_5 &  3                   & 0.017             & 0.311     \\
         I_6 & 22			&32.0657		 	&13.594\\
         I_7 & 13 			&  4.808		 	&33.489 	\\
         I_8 & 6 			& 2.349		 	&13.842\\ 
         I_{10} & 3		& 1.0731	 		&2.165
\end{array}
\]

The most notable discrepancy is in  row $3$ in bold. In this case, the ideal of $X^{*}$  is generated by a cubic, but $X$ is generated by a degree $12$ polynomial with $35$ terms.

\subsection{Tensors}
To calculate new ML degrees when $X^{*}$ is not a complete intersection  [Computation \ref{computation223}], we will work with an adjusted formulation of the dual likelihood equation. This formulation introduces codimension $X^{*}$ auxiliary unknowns (Lagrange multipliers).  Also, instead of working with every generator of the ideal of $X^{*}$, we work with  $\codim(X^{*})$ generators. 
These generators should be  chosen so that they define a reducible variety whose only irreducible component not contained in the coordinate hyperplanes is $X^{*}$.

\begin{example}\label{222tensor}
Consider $2\times 2\times 2$-tensors of the form $[p_{ijk}]$ with $i,j,k,\in \{0,1\}$. 
If $X$ is  the hyperdeterminant of these tensors, then 
$X^{*}$ is defined by the  $2\times 2$ minors of all flattenings of the tensor $[q_{ijk}]$.  
The codimension of $X^{*}$ is $4$. The $4$ flattenings below define $X^{*}$ after saturating by $q_{111}$. 
\[
\begin{array}{cc}
g_{1}\left(q\right)=q_{011}q_{101}-q_{001}q_{111} & g_{2}\left(q\right)=q_{011}q_{110}-q_{010}q_{111}\\
g_{3}\left(q\right)=q_{001}q_{110}-q_{000}q_{111} & g_{4}\left(q\right)=q_{011}q_{101}-q_{001}q_{111}
\end{array}
\]
So by introducing auxiliary unknowns $\lambda_0,\lambda_1,\dots,\lambda_4$ we create a square system of  $12$ equations  in the homogeneous variable groups $(b_{000},\dots,b_{111},b_s)$ and $(\lambda_0,\dots,\lambda_4)$. 
\[
\begin{array}{c}
g_{1}=(b_{011}+b_s)(b_{101}+b_s)-(b_{001}+b_s)(b_{111}+b_s)\\
 g_{2}=(b_{011}+b_s)(b_{110}+b_s)-(b_{010}+b_s)(b_{111}+b_s)\\
g_{3}=(b_{001}+b_s)(b_{110}+b_s)-(b_{000}+b_s)(b_{111}+b_s) \\
 g_{4}=(b_{011}+b_s)(b_{101}+b_s)-(b_{001}+b_s)(b_{111}+b_s)
\end{array}
\]
\[
[\lambda_0,\lambda_1,\lambda_2,\dots,\lambda_4]
\left[
\begin{array}{c}
\nabla\luB\\
\Jac(g)
\end{array}
\right]
\left[\begin{array}{ccc}
b_{000}\\
 & \ddots \\
 &  & b_{111}
\end{array}
\right]
=0.
\]
The solutions with $\lambda_0b_s\neq 0$ give the critical points. 
 We find that there are $13$ critical points of $\luB$ on $X^{*}$, agreeing with \cite{DSS09}, page 53.
\end{example}

The next example is a new computational result to determine the ML degree of a hyperdeterminant.

\begin{computation}\label{computation223}
Let $X$ denote the hyperdeterminant of $2\times 2\times3$ tensors of the form $[p_{ijk}]$ for $i\in\{0,1\}$, $j\in\{0,1\}$, $k\in\{0,1,2\}$.
Then the ML degree of $X$ is $71$. 
\end{computation}
\begin{proof}
The variety $X$ is dual to the variety $X^{*}$  defined by the $2\times 2$-minors of the flattenings of the  $2\times 2\times 3$ tensor $[q_{ijk}]$ with $i\in\{0,1\}$, $j\in\{0,1\}$, $k\in\{0,1,2\}$.
The variety $X^{*}$ has codimension $7$, degree $12$, and $24$ generators. 
We consider $7$ of the $24$ generators, 
\[
\begin{array}{cccccc}
g_{1}\left(q\right) & = & q_{102}q_{111}-q_{101}q_{112}\\
g_{2}\left(q\right) & = & q_{102}q_{110}-q_{100}q_{112}\\
g_{3}\left(q\right) & = & q_{002}q_{111}-q_{001}q_{112}\\
g_{4}\left(q\right) & = & q_{012}q_{102}-q_{002}q_{112}\\
g_{5}\left(q\right) & = & q_{012}q_{111}-q_{011}q_{112}\\
g_{6}\left(q\right) & = & q_{012}q_{110}-q_{010}q_{112}\\
g_{7}\left(q\right) & = & q_{002}q_{110}-q_{000}q_{112}
\end{array}
\]
such that when saturated by $q_{112}$ we recover the dual variety $X^{*}$.
We solve the following square system of equations: the seven equations
\[
g_1(b_0+b_s,\dots,b_n+b_s)=\dots=g_7(b_0+b_s,\dots,b_n+b_s)=0
\]
and the $12$ equations 
\[
[1,\lambda_1,\lambda_2,\dots,\lambda_7]
M
\left[\begin{array}{ccc}
b_{101}\\
 & \ddots \\
 &  & b_{112}
\end{array}
\right]
=0,
\]
 with $M$ being  \eqref{MM} where  ${(q_{101},\dots,{q_{112}})}={(b_{101}+b_s,\dots,{b_{112}}+b_s)}$ and $u$   consisting of random complex numbers (random  positive integers) to determine the ML degree of $X$ numerically (symbolically). 

\small\begin{equation}\label{MM}
\left[\begin{array}{cccccccccccc}
\frac{u_{101}}{b_{101}} &
\frac{u_{011}}{b_{011}} &
\frac{u_{100}}{b_{100}} &
\frac{u_{010}}{b_{010}} &
\frac{u_{001}}{b_{001}} &
\frac{u_{000}}{b_{000}} &
\frac{u_{002}}{b_{002}} &
\frac{u_{012}}{b_{012}} &
\frac{u_{102}}{b_{102}} &
\frac{u_{110}}{b_{110}} &
\frac{u_{111}}{b_{111}} &
\frac{u_{112}}{b_{112}} \\
-q_{112} & 0 & 0 & 0 & 0 & 0 & 0 & 0 & q_{111} & 0 & q_{102} & -q_{101}\\
 & -q_{112} & 0 & 0 & 0 & 0 & 0 & q_{111} & 0 & 0 & q_{012} & -q_{011}\\
 &  & -q_{112} & 0 & 0 & 0 & 0 & 0 & q_{110} & q_{102} & 0 & -q_{100}\\
 &  &  & -q_{112} & 0 & 0 & 0 & q_{110} & 0 & q_{012} & 0 & -q{}_{010}\\
 &  &  &  & -q_{112} & 0 & q_{111} & 0 & 0 & 0 & q_{002} & -q_{001}\\
 &  &  &  &  & -q_{112} & q_{110} & 0 & 0 & q_{002} & 0 & -q_{000}\\
 &  &  &  &  &  & -q_{112} & q_{102} & q_{012} & 0 & 0 & -q_{002}
\end{array}\right]
\end{equation}

The ML degree $71$ was obtained using exact methods in  \texttt{Macaulay2} in 10,943 seconds and  using 
numerical methods in \texttt{Bertini} in 5,796 seconds.
Both computations were performed on the UC Berkeley server \texttt{apppsa} which has four  16-core 2.3GHz AMD Opteron 6276 CPUs. 
The Bertini computation was done in parallel using 20 of the 64~cores.

One could have attempted to compute the number $71$ using Algorithm 6 of \cite{HKS05}.
However, to do so, we must have the defining equations of $X$. 
 We were not able to compute these equations  ourselves,
but the hyperdeterminant of $2\times 2\times 3$ tensors is 
listed on page 7 of  \cite{Bre11}. 
This is a degree 
6 polynomial with 66 terms. 
We were unable to determine the $71$ with the standard likelihood equations and with the Lagrange likelihood equations (page 4 of \cite{GR14}).   
\end{proof}

The next interesting case is when 
$X$ is the hyperdeterminant of $2\times 2 \times 2\times 2$ tensors. 
In this case, $X$ is defined by a polynomial of degree $24$ in $16$ unknowns that has $2,894,276$ terms \cite{HSYY08}.
There is no way we will be able to effectively write down the standard likelihood equations for $X$. 
However, it's dual variety $X^{*}$ is a binomial ideal consisting of the $2\times 2$-minors of all of its flattenings, and 
we may have a chance of solving the dual likelihood equations both numerically and symbolically.

\section{The Dual MLE Problem vs\\ Maximum Likelihood Duality}
In this section we introduce an example and show how the results  presented in this paper fit in context with previous work on Maximum Likelihood Duality. 
In \cite{DR13,HRS13} the notion of Maximum likelihood duality (ML-duality) was introduced.   ML-duality gave a bijection between critical points of $\lup$ on two different statistical models. 
\begin{defn}
A pair of algebraic statistical models $X$ and $Y$ in $\PP^{n}$ are said to be \textit{ML-dual} if for generic  $u$ there is an involutive bijection between  points of $L_X(u)$ and 
points of $L_Y(u)$. 
Moreover, this bijection 
pairs points of $L_X(u)$ with  points of $L_Y(u)$ such that the coordinate-wise product of each pair can be  expressed in  terms of the data $u$ alone.
\end{defn}


\begin{example}
Suppose $r\leq m\leq n$, and  let  $V_{m,n,r}$  denote the Zariski closure in $\PP^{mn-1}$ of rank $r$ matrices of the form 
$$
\left[\begin{array}{cccc}
p_{11} &p_{12}& \dots & p_{1n}\\
p_{21} &p_{22} & &\\
\vdots & &\ddots &\\
p_{m1}& & &p_{mn}
\end{array}\right].$$
Then $V_{m,n,r}^{*}$ is known to be the Zariski closure in $\PP^{mn-1}$ of rank $m-r$ matrices of the~form
$$
\left[\begin{array}{cccc}
q_{11} &q_{12}& \dots & q_{1n}\\
q_{21} &q_{22} & &\\
\vdots & &\ddots &\\
q_{m1}& & &q_{mn}
\end{array}\right].$$
Fix a choice of $m,n,r$. If we  take $X=V_{m,n,r}$, then points in $X'$ will be represented as 
$$[p_{ij}:p_s]\in X'\subset\PP^{mn}$$
and points in $X'^{*}$ will be  represented as 
$$[b_{ij}:b_s]\in X'^{*}\subset\PP^{mn}.$$

With Corollary \ref{bijectionPB}, it follows there is a bijection between $\cPu$ and $\cBu$.
On the other hand, by Theorem 1 in \cite{DR13} we know that $V_{m,n,r}$ and $V_{m,n,m-r}$ are ML-dual. This means if we take $Y$ to be $V_{m,n,m-r}$ there is an involutive bijection between critical points $L_X(u)$ and $L_Y(u)$ for generic choices of $u$.
In particular, the bijection is such that the coordinate-wise product of the paired points is  
$$\left(
\left[  \frac{u_{i+}u_{+j}u_{ij}}{u_{++}^3}:1   \right],
\left[  \frac{u_{ij}u_{++}}{u_{i+}u_{+j}}:1   \right]
\right)\in\PP^{mn}\times\PP^{mn}.
$$
Here $u_{++}:=\sum_{i,j}u_{ij}$, $u_{i+}:=\sum_j u_{ij}$, and $u_{+j}:=\sum_i u_{ij}$, and likewise for $p_{++},p_{i+},p_{+j}$. 

\end{example}

\section{Conclusion}
In this paper we have given an elegant formulation of the MLE problem involving conormal varieties.  This formulation allows us to avoid the  computation of saturation by the product of unknowns. 
We also define the dual likelihood equations that allow us  to compute critical points on $X$ even if we only know the defining equations of its dual~$X^{*}$ [Algorithm \ref{alg1}]. 
The important feature of the dual likelihood equations comes from the fact that the defining equations of $X$ may be too difficult to work with.
In addition, we showed that if we solve the dual equations, 
we can recover the critical points on $X$  [Theorem \ref{thm:projMLD}].
More broadly, we showed that if  there is a bijection between 
critical points of a function restricted to  a variety and  critical points of a monomial restricted to a different variety, then  we can formulate a  new set of ``dual" equations to determine these~points.

\section*{Acknowledgements}
The author would like to thank Elizabeth Gross, Kim Laine, Zvi Rosen, and Bernd Sturmfels for their comments and suggestions to improve earlier versions of the paper. 

\bibliographystyle{abbrv}

\end{document}